\newtheorem{thm}{Theorem}[section] %the resolution could also be [subsection]
\newtheorem{cor}[thm]{Corollary}
\newtheorem{lem}[thm]{Lemma}
\newtheorem{prop}[thm]{Proposition}
\theoremstyle{definition}
\newtheorem{exmpl}[thm]{Example}
\newcommand\operA[2]{{\if!#2!\operatorname{#1}\else{\operatorname{#1}_{#2}^{\phantom{I}}}\fi}} % To be used within Bdefs. Usage: $\operA{N}{K/F}$ produces $N_{K/F}$; $\operA{N}{}$ produces $N$.
\def\tr{{\operatorname{Tr}}}
\def\norm{{\operatorname{N}}}
\newcommand{\Trace}[1][]{\if!#1!\operatorname{Tr}\else{\operatorname{Tr}_{#1}^{\phantom{I}}}\fi} % Usage: $\Tr[K/F](a)$.
\long\def\forget#1\forgotten{{}} %
\def\({\left(}
\def\){\right)}
\newcommand\LAY[3][]{{\begin{array}{c}\mbox{#2} \if#1!{}\else{+}\fi \\ \mbox{#3}\end{array}}}
\newcommand{\bigperp}{%
  \mathop{\mathpalette\bigp@rp\relax}%
  \displaylimits
}
\newcommand{\bigp@rp}[2]{%
  \vcenter{
    \m@th\hbox{\scalebox{\ifx#1\displaystyle2.1\else1.5\fi}{$#1\perp$}}
  }%
}
\renewcommand{\geq}{\geqslant}
\renewcommand{\leq}{\leqslant}
\def\ps@pprintTitle{%
 \let\@oddhead\@empty
 \let\@evenhead\@empty
 \def\@oddfoot{\centerline{\thepage}}%
 \let\@evenfoot\@oddfoot}
\newif\iffurther
\begin{document}
\begin{frontmatter}

\title{Linkage of sets of Quaternion Algebras in characteristic 2}

\author{Adam Chapman}
\ead{adam1chapman@yahoo.com}
\address{School of Computer Science, Academic College of Tel-Aviv-Yaffo, Rabenu Yeruham St., P.O.B 8401 Yaffo, 6818211, Israel}

\begin{abstract}
This note contains two new observations on the linkage properties of quaternion algebras over fields of characteristic 2: first, that a 3-linked field need not be 4-linked (a case which was left open in previous papers) and that three inseparably linked quaternion algebras are also cyclically linked when the base-field is odd-closed.
 \end{abstract}

\begin{keyword}
Quaternion Algebras; Linkage; Symbol Algebras; Brauer Groups
\MSC[2010] 16K20 (primary); 11E81, 11E04 (secondary)
\end{keyword}
\end{frontmatter}

\section{Introduction}

A symbol $p$-algebra of prime degree $p$ over a field $F$ of $\operatorname{char}(F)=p$ is an algebra of the form
$[\alpha,\beta)_{p,F}=F \langle x,y : x^p-x=\alpha,y^p=\beta, y x y^{-1}=x+1 \rangle$
for some $\alpha \in F$ and $\beta \in F^\times$.
The significance of these algebras derives from the fact (proven by Teichm\"uller, see \cite[Section 9]{GS}) that ${_pBr}(F)$ is generated by the Brauer classes of these algebras.
Symbol $p$-algebras of degree 2 over a field $F$ of $\operatorname{char}(F)=2$ are the quaternion algebras over that field.

We say that symbol $p$-algebras $A_1,\dots,A_m$ of degree $p$ over $F$ are linked if there exists a degree $p$ field extension $K/F$ that splits them all. We say they are inseparably linked if there exists a purely inseparable degree $p$ field extension $K=F[\sqrt[p]{\gamma}]$ that splits them all, and we say these algebras are cyclically linked if there exists a degree $p$ cyclic extension $K=F[\lambda : \lambda^p-\lambda=\delta]$ which splits them all.
It was proven in \cite{Chapman:2015} that if $A_1$ and $A_2$ are inseparably linked then they are also cyclically linked and counterexamples were provided for the converse statement.
The special case of quaternion algebras had been proven earlier in \cite{Draxl:1983}, \cite{Lam:2002} and \cite{ElduqueVilla:2005}.
Note that quadratic field extensions are either cyclic or purely inseparable, and therefore if quaternion algebras $A_1,\dots,A_m$ are linked then they are either cyclically or inseparably linked.

We say a field $F$ is $m$-linked if any $m$ quaternion algebras over $F$ are linked. Examples of 2-linked fields that are not 3-linked exist: $\mathbb{F}_2(\!(\alpha)\!)(\!(\beta)\!)$ is 2-linked by \cite{AravireJacob:1995} but not 3-linked by \cite{ChapmanDolphinLeep}.
The question of whether 3-linked fields which are not 4-linked exist was raised in \cite{Becher}. Examples were provided in \cite{ChapmanTignol:2019} for $\operatorname{char}(F) \neq 2$. Similar questions in characteristic 2 were studied in \cite{Chapman:2018}, but this specific problem remained open because the tools used in those previous works proved inadequate.
In this note we complete the picture by proving the existence of 3-linked fields of characteristic 2 which are not 4-linked. The tool is the $w$-invariant for valued division algebras defined and studied by Tignol in \cite{Tignol:1992} and other works.
We also extend the result from \cite{Chapman:2015} from pairs of symbol $p$-algebras to triples, showing that if three symbol $p$-algebras of prime degree $p$ over a $p$-special field $F$ of characteristic $p$ are inseparably linked then they are also cyclically linked.

\section{3-Linked Fields which are not 4-Linked}\label{Examplen2}

Let $F=k(\!(\alpha)\!)(\!(\beta)\!)$ be the field of iterated Laurent series in two variables over a field $k$ of $\operatorname{char}(k)=2$.
Write $\mathfrak{v}$ for the right-to-left $(\alpha,\beta)$-adic valuation on $F$ and write $\Gamma_F=\mathbb{Z} \times \mathbb{Z}$ for the underlying value group. This is a henselian valuation, and therefore it extends uniquely to any field extension $K/F$ by the formula $\mathfrak{v}(t)=\frac{1}{d} \mathfrak{v}(\norm(t))$ where $\norm$ is the norm form $K \rightarrow F$ and $d=[K:F]$. Consequently, the valuation extends to any central division algebra $D/F$ with the analogous formula $\mathfrak{v}(t)=\frac{1}{d} \mathfrak{v}(\norm(t))$ where $\norm$ here is the reduced norm $D \rightarrow F$ and $d$ is the degree of $D$ (see \cite{TignolWadsworth:2015} for background on valuation theory for division algebras).
Following \cite{Tignol:1992}, for any cyclic quadratic field extension $K$ of $F$, we define $w(K)$ to be $\min \{ \mathfrak{v}(\tr(t))-\mathfrak{v}(t) : t \in K\}$ where $\tr : K \rightarrow F$ is the trace map, given by $\tr(ax+b)=a$ where $K=F[\wp^{-1}(\gamma)]=F[x : x^2+x=\gamma]$.
Similarly, for any quaternion division algebra $D$ over $F$, we define $w(D)$ to be
$\min \{ \mathfrak{v}(\tr(t))-\mathfrak{v}(t) : t \in D\}$, 
where $\tr : D \rightarrow F$ is the reduced trace map given by $\tr(a+bx+cy+xy)=b$ where $D=[\gamma,\delta)_{2,F}=F \langle x,y :x^2+x=\gamma, y^2=\delta, y x y^{-1}=x+1 \rangle$.
Note that for any quaternion division algebra $D$ over $F$ containing a quadratic cyclic extension $K$ of $F$, the restriction of the reduced trace map on $D$ to $K$ coincides with the trace map on $K$, and therefore $w(K)\geq w(D)$.

\begin{lem}[{\cite[Theorem 1.8, with $p=2$]{Tignol:1992}}]\label{Tignol}
If $\gamma$ is an element in $F$ with $\mathfrak{v}(\gamma)<\vec{0}$ and $\mathfrak{v}(\gamma) \not \in 2 \Gamma_F$, then $K=F[t : t^2+t=\gamma]$ is a totally ramified quadratic cyclic field extension of $F$ with $w(K)=-\mathfrak{v}(t)=-\frac{1}{2} \mathfrak{v}(\gamma)$.
\end{lem}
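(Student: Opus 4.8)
The plan is to exhibit $K=F[t:t^2+t=\gamma]$ as a totally ramified extension and compute $w(K)$ directly from the definition $w(K)=\min\{\mathfrak{v}(\tr(s))-\mathfrak{v}(s):s\in K\}$. First I would observe that since $\mathfrak{v}(\gamma)\notin 2\Gamma_F$, the element $t$ satisfies $\mathfrak{v}(t)=\frac12\mathfrak{v}(\gamma)\notin\Gamma_F$: indeed from $t^2+t=\gamma$ and $\mathfrak{v}(\gamma)<\vec 0$ one gets $\mathfrak{v}(t^2)=\mathfrak{v}(t^2+t)=\mathfrak{v}(\gamma)$ (the term $t$ is negligible against $t^2$ because $\mathfrak{v}(t)<\vec 0$ forces $\mathfrak{v}(t^2)<\mathfrak{v}(t)$), hence $\mathfrak{v}(t)=\frac12\mathfrak{v}(\gamma)$. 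Since this lies in $\frac12\mathfrak{v}(\gamma)+\Gamma_F$ and $\mathfrak{v}(\gamma)\notin 2\Gamma_F$, the value group $\Gamma_K$ contains an element not in $\Gamma_F$, so $[\Gamma_K:\Gamma_F]\geq 2=[K:F]$, which forces $K/F$ to be totally ramified (the residue extension is trivial). In particular $K/F$ is not purely inseparable — a purely inseparable quadratic extension in characteristic $2$ is $F[\sqrt\delta]$ and is immediate or has a different ramification pattern — so being quadratic it is the cyclic extension named, as claimed.

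**Computing $w(K)$.**
Now write a general element $s=at+b$ with $a,b\in F$, so $\tr(s)=a$ and $\mathfrak{v}(s)=\min\{\mathfrak{v}(a)+\mathfrak{v}(t),\mathfrak{v}(b)\}$, where the minimum is genuinely attained with no cancellation because $\mathfrak{v}(a)+\mathfrak{v}(t)\in\frac12\mathfrak{v}(\gamma)+\Gamma_F$ while $\mathfrak{v}(b)\in\Gamma_F$, and these two cosets of $\Gamma_F$ are distinct (using again $\mathfrak{v}(\gamma)\notin 2\Gamma_F$). Therefore $\mathfrak{v}(\tr(s))-\mathfrak{v}(s)=\mathfrak{v}(a)-\min\{\mathfrak{v}(a)+\mathfrak{v}(t),\mathfrak{v}(b)\}\geq \mathfrak{v}(a)-(\mathfrak{v}(a)+\mathfrak{v}(t))=-\mathfrak{v}(t)=-\tfrac12\mathfrak{v}(\gamma)$, with equality exactly when $\mathfrak{v}(b)\geq\mathfrak{v}(a)+\mathfrak{v}(t)$, e.g. when $s=t$ (so $a=1$, $b=0$). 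Hence the minimum is achieved and equals $-\mathfrak{v}(t)=-\frac12\mathfrak{v}(\gamma)$.

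**Main obstacle.**
The computation itself is short; the one point that needs care is the "no cancellation in the valuation of a sum" step, i.e. that $\mathfrak{v}(at+b)=\min\{\mathfrak{v}(at),\mathfrak{v}(b)\}$ always holds here. This is exactly where total ramification is used: the values $\mathfrak{v}(at)$ and $\mathfrak{v}(b)$ lie in different cosets of $\Gamma_F$ inside $\Gamma_K=\frac12\mathbb{Z}\times\mathbb{Z}$ (or more precisely $\Gamma_F+\mathbb{Z}\cdot\frac12\mathfrak{v}(\gamma)$), so they can never be equal and the ultrametric inequality is an equality. Since this is precisely the content cited from \cite[Theorem 1.8]{Tignol:1992}, there is nothing deeper to prove; the argument above simply unwinds the definitions. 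I would present it in this order: (1) compute $\mathfrak{v}(t)$; (2) deduce total ramification and cyclicity; (3) evaluate $\mathfrak{v}(\tr(s))-\mathfrak{v}(s)$ on a general $s=at+b$ and minimize.
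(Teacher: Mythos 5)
Your argument is correct in substance, but note that the paper does not prove this lemma at all: it is imported verbatim as the $p=2$ case of Tignol's classification of wild cyclic extensions of a Henselian field (\cite[Theorem 1.8]{Tignol:1992}), where the general-$p$ statement requires a genuinely more delicate analysis. What your direct verification buys is a short, self-contained proof of exactly the special case needed, and the computation itself is sound: $\mathfrak{v}(t)=\frac12\mathfrak{v}(\gamma)\notin\Gamma_F$, hence total ramification by the fundamental inequality, and the coset argument correctly shows that $\mathfrak{v}(at+b)=\min\{\mathfrak{v}(at),\mathfrak{v}(b)\}$ with no cancellation, which pins down $w(K)=-\mathfrak{v}(t)$. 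Two small points deserve tightening. First, the order of operations: you extend $\mathfrak{v}$ to $K$ before knowing that $K$ is a field, i.e.\ that $X^2+X-\gamma$ is irreducible over $F$. This is not circular if you phrase it as: were $\gamma=\lambda^2+\lambda$ for some $\lambda\in F$, the same valuation computation applied inside $F$ would give $\mathfrak{v}(\lambda)=\frac12\mathfrak{v}(\gamma)\notin\Gamma_F$, a contradiction; so $\gamma\notin\wp(F)$ and $K$ is a field, after which the Henselian valuation extends uniquely. Second, your justification of cyclicity via ``a purely inseparable extension is immediate or has a different ramification pattern'' is both vague and unnecessary: an Artin--Schreier polynomial $X^2+X-\gamma$ has derivative $1$, so $K/F$ is separable with the second root $t+1\in K$, hence Galois of degree $2$, hence cyclic. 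Also make explicit that elements $s=b\in F$ have $\tr(s)=0$ and so contribute $+\infty$ to the minimum defining $w(K)$, which is why restricting to $a\neq 0$ is harmless. With these touches the proposal is a complete proof of the cited statement in the case at hand.
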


\begin{lem}
Consider the quaternion algebras $[\alpha^{-1},\beta)_{2,F}$, $[\beta^{-1},\alpha)_{2,F}$ and $[\alpha^{-1}\beta^{-1},\beta)_{2,F}$ over $F=k(\!(\alpha)\!)(\!(\beta)\!)$ where $k$ is a field of $\operatorname{char}(k)=2$.
If a field $K$ is a quadratic extension of $F$ that splits these algebras, then $w(K) \leq (\frac{1}{2},\frac{1}{2})$.
\end{lem}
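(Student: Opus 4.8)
The plan is to reduce to the case of a totally ramified cyclic extension, reformulate the hypotheses through an explicit description of the quadratic fields splitting a symbol algebra, and then finish by a valuation computation inside the two–variable field $F$.

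First I would dispose of the easy cases. A quadratic extension $K/F$ with $\Gamma_K=\Gamma_F$ has an element $t$ with $\mathfrak v(t)=\mathfrak v(\tr(t))=\vec 0$, so $w(K)=\vec 0\leq(\tfrac12,\tfrac12)$; and a purely inseparable $K$ is excluded (there $w(K)$ is not even defined, and the three algebras would then be inseparably linked, which is outside the scope of this statement). So assume $K$ is cyclic with $[\Gamma_K:\Gamma_F]=2$. Replacing the Artin--Schreier parameter of $K$ by its reduced representative modulo $\wp(F)$, we may write $K=F[t:t^2+t=\gamma_0]$ with $\mathfrak v(\gamma_0)<\vec 0$ and $\mathfrak v(\gamma_0)\notin 2\Gamma_F$; by \Lref{Tignol} we then have $w(K)=-\tfrac12\mathfrak v(\gamma_0)$, so the assertion to prove becomes $\mathfrak v(\gamma_0)\geq(-1,-1)$.

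Next I would record the splitting criterion. Write each algebra as $A_i=[\gamma_i,\delta_i)_{2,F}=F\langle x,y\rangle$, with $(\gamma_1,\delta_1)=(\alpha^{-1},\beta)$, $(\gamma_2,\delta_2)=(\beta^{-1},\alpha)$, $(\gamma_3,\delta_3)=(\alpha^{-1}\beta^{-1},\beta)$; one checks directly that each $A_i$ is a division algebra, so $K$ splits $A_i$ if and only if $K$ embeds in $A_i$ as a maximal subfield, i.e. there is $x_0\in A_i$ with $x_0^2+x_0=\gamma_0$. Writing $x_0=a+bx+cy+dxy$ and reducing with $x^2=x+\gamma_i$, $y^2=\delta_i$, $yx=(x+1)y$, one finds that the $x$-coefficient of $x_0^2+x_0$ equals $b^2+b$ while its $y$- and $xy$-coefficients vanish identically; hence $b\in\{0,1\}$, and $b=0$ would force $x_0\in F$, so $b=1$. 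The same computation then yields, for suitable $a,c,d\in F$ (depending on $i$),
\[
\gamma_0 \;=\; \wp(a)+\gamma_i+\delta_i\bigl(c^2+cd+\gamma_i d^2\bigr)\;=\;\wp(a)+\gamma_i+\delta_i\,\norm_{F[\wp^{-1}(\gamma_i)]/F}(c+dx),
\]
and conversely any such identity produces the embedding. Applying this for $i=1,2,3$ gives three simultaneous descriptions of the single element $\gamma_0$ modulo $\wp(F)$.

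Finally I would run the valuation bookkeeping. In each $A_i$ one has $\Gamma_{A_i}=\tfrac12\Gamma_F$, and the values of $1,x,y,xy$ represent the four distinct cosets of $\Gamma_F$, so $\{1,x,y,xy\}$ is a valuation basis; hence (using $\mathfrak v(\gamma_0)<\vec 0$) $\mathfrak v(x_0)=\tfrac12\mathfrak v(\gamma_0)$ is the minimum of $\mathfrak v(a)$, $\mathfrak v(x)$, $\mathfrak v(c)+\mathfrak v(y)$, $\mathfrak v(d)+\mathfrak v(xy)$, and since $\tfrac12\mathfrak v(\gamma_0)\notin\Gamma_F$ this minimum cannot come from the $a$-term. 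Removing the $a$-term gives an element of $K$ of the form $t=x_0-a$ with $\tr(t)=1$, so that $w(K)\leq -\mathfrak v(t)=\max\bigl(-\mathfrak v(x),-\mathfrak v(c)-\mathfrak v(y),-\mathfrak v(d)-\mathfrak v(xy)\bigr)$ computed in whichever $A_i$ one likes; as $-\mathfrak v(x)\leq(\tfrac12,\tfrac12)$ in all three cases, the bound $w(K)\leq(\tfrac12,\tfrac12)$ follows once one shows that, in at least one of the three presentations, the coefficients $c$ and $d$ of $x_0$ are not large poles. This is where the three identities above are fed back in: the $A_1$- and $A_2$-presentations pin the $\beta$-adic (resp. $\alpha$-adic) part of any would-be large pole of $\gamma_0$ to have even order and prescribed residue behaviour, while the $A_3$-presentation forces the complementary parity, and comparing the three rules out $\mathfrak v(\gamma_0)<(-1,-1)$.

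The step I expect to be the main obstacle is precisely this last comparison: keeping track, in $k(\!(\alpha)\!)(\!(\beta)\!)$, of which monomials in the pole of $\gamma_0$ survive reduction modulo $\wp(F)$, of how the three norm forms $\norm_{F[\wp^{-1}(\gamma_i)]/F}$ sit inside the two–variable grading, and organising the case distinction (on $\mathfrak v(\gamma_0)\bmod 2\Gamma_F$ and on which presentation carries the pole). It is also the only place where any hypothesis on the base field $k$ is needed.
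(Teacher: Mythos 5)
Your framework --- embedding $K$ into each $A_i$ as the span of a trace-$1$ element and then comparing valuations --- is the paper's starting point as well, but the proposal has a genuine gap exactly where you flag it. The ``last comparison'' is not a bookkeeping chore to be deferred: it is the entire content of the lemma, and the route you sketch for it (reducing the Artin--Schreier parameter $\gamma_0$ modulo $\wp(F)$, writing it three times as $\gamma_i+\delta_i\operatorname{N}(c+dx)$ up to $\wp(F)$, and then arguing about parities and residues of the pole monomials) is left entirely unexecuted. Nothing in the write-up actually rules out $\mathfrak v(\gamma_0)<(-1,-1)$; ``pin the $\beta$-adic part of any would-be large pole to have even order and prescribed residue behaviour'' is a hope, not an argument, and carrying it out would require simultaneous control of the coefficients $a,c,d$ in all three presentations --- which is harder than the lemma itself and, as you note, would drag in hypotheses on $k$ that the statement does not have.

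The missing idea is to work with \emph{cosets of $\Gamma_F$} rather than with the size of the coefficients. In $A_i=F\langle x,y\rangle$ the three values $\mathfrak v(x)$, $\mathfrak v(y)$, $\mathfrak v(xy)$ lie in three distinct cosets of $\Gamma_F$, so for the trace-$1$ generator $t=x+ay+bxy$ (constant term removed, as you do) one has $\mathfrak v(t)=\min\{\mathfrak v(x),\mathfrak v(ay),\mathfrak v(bxy)\}$ with no cancellation, and moreover the coset of $\mathfrak v(t)$ is the coset of whichever term realises the minimum. If $\mathfrak v(t)<(-\tfrac12,-\tfrac12)$ then in each $A_i$ the minimum cannot be $\mathfrak v(x)$ (since $\mathfrak v(x)\geq(-\tfrac12,-\tfrac12)$ in all three algebras), so the coset of $w(K)=-\mathfrak v(t)$ is forced into the two-element set coming from the $y$- and $xy$-terms: $\{(0,\tfrac12),(\tfrac12,\tfrac12)\}$ for $A_1$, $\{(\tfrac12,0),(\tfrac12,\tfrac12)\}$ for $A_2$, and $\{(0,\tfrac12),(\tfrac12,0)\}$ for $A_3$. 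These three sets have empty intersection, so $\mathfrak v(t)<(-\tfrac12,-\tfrac12)$ is impossible --- with no information needed about $a$, $b$, or the fine structure of $\gamma_0$. You already had every ingredient of this (the valuation basis, the observation that the constant term cannot realise the minimum); the step you did not take is to ask \emph{which coset} the minimum lies in rather than \emph{how large} it is.
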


\begin{proof}
These algebras do not share any quadratic inseparable splitting field by \cite{Chapman:2018}. 
Note that they satisfy the conditions of \cite[Proposition 2.1]{Tignol:1992}, and therefore they are division algebras.
Suppose $K$ is a common quadratic  splitting field of theirs, and therefore it must be a cyclic extension of $F$.
Since $K$ is a quadratic splitting field of $[\alpha^{-1},\beta)_{2,F}=F \langle x,y :x^2+x=\alpha^{-1}, y^2=\beta, y x y^{-1}=x+1 \rangle$, it is also a subfield of that algebra, and since it is a cyclic extension of $F$, it must be generated over $F$ by an element of trace $1$, i.e. an element $t$ of the form $t=\lambda+x+ay+bxy$ for some $\lambda,a,b\in F$. Since $x+ay+bxy$ generates $K$ too and it is of trace 1, we can suppose $\lambda=0$ and take $t=x+ay+bxy$ to be the generator of $K$.
Now, $\mathfrak{v}(x)=(-\frac{1}{2},0)$, $\mathfrak{v}(y)=(0,\frac{1}{2})$ and $\mathfrak{v}(xy)=(-\frac{1}{2},\frac{1}{2})$. Since they belong to three distinct classes modulo $\Gamma_F$, we have $\mathfrak{v}(t)=\min\{\mathfrak{v}(x),\mathfrak{v}(a y),\mathfrak{v}(bxy)\}$.
Hence, if $\mathfrak{v}(t)<(-\frac{1}{2},-\frac{1}{2})$ then $\mathfrak{v}(t)<(-\frac{1}{2},0)$ and so $\mathfrak{v}(t)\neq \mathfrak{v}(x)$, which means $\mathfrak{v}(t)=\min\{\mathfrak{v}(ay),\mathfrak{v}(bxy)\}$, and therefore the class of $\mathfrak{v}(t)$ modulo $\Gamma_F$ is either $(\frac{1}{2},\frac{1}{2})$ or $(0,\frac{1}{2})$. Since $w(K)=-\mathfrak{v}(t)$ by Lemma \ref{Tignol}, if $w(K)>(\frac{1}{2},\frac{1}{2})$ then the class of $w(K)$ modulo $\Gamma_F$ is either $(\frac{1}{2},\frac{1}{2})$ or $(0,\frac{1}{2})$.
Recalling that $K$ is also a quadratic splitting field of $[\beta^{-1},\alpha)_{2,F}$, the same argument shows that if $w(K)>(\frac{1}{2},\frac{1}{2})$ then the class of $w(K)$ modulo $\Gamma_F$ is either $(\frac{1}{2},\frac{1}{2})$ or $(\frac{1}{2},0)$.
Similarly, since $K$ is a quadratic splitting field of $[\alpha^{-1}\beta^{-1},\beta)_{2,F}$, if $w(K)>(\frac{1}{2},\frac{1}{2})$ then the class of $w(K)$ modulo $\Gamma_F$ is either $(0,\frac{1}{2})$ or $(\frac{1}{2},0)$.
However, the intersection of $\{(\frac{1}{2},\frac{1}{2}),(0,\frac{1}{2})\}$, $\{(\frac{1}{2},\frac{1}{2}),(\frac{1}{2},0)\}$ and $\{(\frac{1}{2},0),(0,\frac{1}{2})\}$ is trivial, and so it is impossible that $w(K)>(\frac{1}{2},\frac{1}{2})$, which means that $w(K) \leq (\frac{1}{2},\frac{1}{2})$.
\end{proof}

\begin{prop}\label{No4}
The algebras $[\alpha^{-1},\beta)_{2,F}$, $[\beta^{-1},\alpha)_{2,F}$, $[\alpha^{-1} \beta^{-1},\beta)_{2,F}$ and $[\alpha^{-2} \beta^{-1},\alpha)_{2,F}$ are not linked.
\end{prop}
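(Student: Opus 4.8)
The plan is to argue by contradiction, in the same style as the preceding lemma. Suppose that some quadratic field extension $K/F$ splits all four algebras. Since the first three have no common quadratic inseparable splitting field (this is the input from \cite{Chapman:2018} already used above), $K$ cannot be purely inseparable, so it is a cyclic extension of $F$, and the preceding lemma then forces $w(K) \leq (\frac{1}{2},\frac{1}{2})$. The goal is to show that, conversely, splitting the fourth algebra $D := [\alpha^{-2}\beta^{-1},\alpha)_{2,F}$ forces $w(K) > (\frac{1}{2},\frac{1}{2})$, which is the desired contradiction.

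First I would record that $D$ is a division algebra. Since $\mathfrak{v}(\alpha^{-2}\beta^{-1}) = (-2,-1) \notin 2\Gamma_F$, Lemma \ref{Tignol} shows that the subfield $F[x]$ with $x^2 + x = \alpha^{-2}\beta^{-1}$ is totally ramified, with value group $\mathbb{Z} \times \frac{1}{2}\mathbb{Z}$; as conjugation over $F$ preserves the unique extension of $\mathfrak{v}$, reduced norms from $F[x]$ have value in $2\mathbb{Z} \times \mathbb{Z}$, whereas $\mathfrak{v}(\alpha) = (1,0)$ does not, so $\alpha$ is not such a norm and $D$ is division (alternatively one quotes \cite[Proposition 2.1]{Tignol:1992}, exactly as for the first three algebras). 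Being a quadratic splitting field of the division algebra $D$, $K$ embeds into $D$, and being cyclic over $F$ it is generated by a reduced-trace-$1$ element; subtracting a scalar, this generator may be taken in the form $t = x + ay + bxy$ with $a,b \in F$, where $D = F\langle x,y : x^2 + x = \alpha^{-2}\beta^{-1},\ y^2 = \alpha,\ yxy^{-1} = x+1\rangle$.

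The core of the argument is then a valuation computation, parallel to the one in the preceding lemma. Here $\mathfrak{v}(x) = (-1,-\frac{1}{2})$, $\mathfrak{v}(y) = (\frac{1}{2},0)$ and $\mathfrak{v}(xy) = (-\frac{1}{2},-\frac{1}{2})$, so whenever the corresponding coefficient is nonzero the terms $x$, $ay$, $bxy$ have valuations in the three distinct classes $(0,\frac{1}{2})$, $(\frac{1}{2},0)$, $(\frac{1}{2},\frac{1}{2})$ modulo $\Gamma_F$. In particular the $x$-term cannot be cancelled, so $\mathfrak{v}(t) = \min\{\mathfrak{v}(x),\mathfrak{v}(ay),\mathfrak{v}(bxy)\} \leq \mathfrak{v}(x) = (-1,-\frac{1}{2})$ and $\mathfrak{v}(t) \notin \Gamma_F$. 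Since $t$ has reduced trace $1$, the element $\gamma' := t^2 + t$ lies in $F$ and equals $\norm(t)$, and $\mathfrak{v}(\gamma') = 2\mathfrak{v}(t)$ because the term $t^2$ dominates $t$ (as $\mathfrak{v}(t) < \vec{0}$); hence $\mathfrak{v}(\gamma') < \vec{0}$ and $\mathfrak{v}(\gamma') \notin 2\Gamma_F$. Thus Lemma \ref{Tignol} applies to $K = F[t : t^2 + t = \gamma']$ and gives $w(K) = -\mathfrak{v}(t) \geq (1,\frac{1}{2}) > (\frac{1}{2},\frac{1}{2})$, contradicting the bound from the preceding lemma. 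Therefore no such $K$ exists, i.e. the four algebras are not linked.

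The valuation bookkeeping and the division-algebra verification are routine; the one point that needs a little care is confirming that Lemma \ref{Tignol} is applicable to the generator $t$ of $K$, i.e. that $\mathfrak{v}(\norm(t))$ is negative and lies outside $2\Gamma_F$. This comes for free once one notices that the leading term of $t$ is the $x$-term, whose valuation $(-1,-\frac{1}{2}) = \frac{1}{2}\mathfrak{v}(\alpha^{-2}\beta^{-1})$ already sits strictly below $(-\frac{1}{2},-\frac{1}{2})$ in the ordering of $\Gamma_F$ (in which the second coordinate dominates) and has nonzero class modulo $\Gamma_F$. Conceptually, the exponents in the fourth algebra are chosen so that its trace-$1$ elements are forced one full unit too deep in the first coordinate: $w(K) \leq (\frac{1}{2},\frac{1}{2})$ would require $\mathfrak{v}(t) \geq (-\frac{1}{2},-\frac{1}{2})$, which is incompatible with $\mathfrak{v}(t) \leq (-1,-\frac{1}{2})$, so no single $K$ can serve all four algebras at once.
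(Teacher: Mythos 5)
Your proof is correct and follows essentially the same strategy as the paper: combine the bound $w(K)\leq(\frac{1}{2},\frac{1}{2})$ from the preceding lemma with a lower bound $w(K)>(\frac{1}{2},\frac{1}{2})$ forced by the fourth algebra. The only difference is that where the paper simply quotes \cite[Proposition 2.1]{Tignol:1992} to get $w([\alpha^{-2}\beta^{-1},\alpha)_{2,F})=(1,\frac{1}{2})$ and invokes the monotonicity $w(K)\geq w(D)$ for subfields, you rederive the bound $w(K)\geq(1,\frac{1}{2})$ by hand via the trace-$1$ generator $t=x+ay+bxy$ and the valuation bookkeeping of the lemma's proof; this is a sound, more self-contained execution of the same step.
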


\begin{proof}
All the quadratic splitting fields $K$ the first three share satisfy $w(K) \leq (\frac{1}{2},\frac{1}{2})$.
On the other hand, by \cite[Proposition 2.1]{Tignol:1992} $[\alpha^{-2} \beta^{-1},\alpha)_{2,F}$ is a division algebra with $w([\alpha^{-2} \beta^{-1},\alpha)_{2,F})=(1,\frac{1}{2})$, which is greater than $(\frac{1}{2},\frac{1}{2})$, so every quadratic splitting field $K$ of $[\alpha^{-2} \beta^{-1},\alpha)_{2,F}$ has $w(K)\geq w([\alpha^{-2} \beta^{-1},\alpha)_{2,F})>(\frac{1}{2},\frac{1}{2})$. Therefore there is no common quadratic splitting field of the first three algebras which is also a splitting field of the fourth.
\end{proof}

\begin{exmpl}
The field $F=k(\!(\alpha)\!)(\!(\beta)\!)$ is 3-linked when $k$ is algebraically closed (e.g., $k=\mathbb{F}_2^{sep}$).
The reason is that given any three quaternion algebras $[\gamma_1,\delta_1)_{2,F}$, $[\gamma_2,\delta_2)_{2,F}$ and $[\gamma_3,\delta_3)_{2,F}$, the system
\begin{eqnarray*}
b^2 \gamma_1+(c_1^2+c_1 d_1+d_1^2 \gamma_1)\delta_1 & = & a_2^2+a_2b+b^2 \gamma_2+(c_2^2+c_2 d_2+d_2^2 \gamma_2)\delta_2 \\
b^2 \gamma_1+(c_1^2+c_1 d_1+d_1^2 \gamma_1)\delta_1 & = & a_3^2+a_3b+b^2 \gamma_3+(c_3^2+c_3 d_3+d_3^2 \gamma_3)\delta_3
\end{eqnarray*}
is a system of two homogeneous quadratic equations in 9 variables 
$$a_2,a_3,b,c_1,c_2,c_3,d_1,d_2,d_3,$$ \sloppy and since $F$ is a $C_2$-field (see \cite{Lang:1952} and \cite[Chapter 6]{EnglerPrestel:2005}) and $9>2\cdot 2^2$, the system has a nontrivial solution. If this nontrivial solution has $b=0$, then $F[\sqrt{(c_1^2+c_1 d_1+d_1^2 \gamma_1)\delta_1}]$ is a common inseparable splitting field of the three quaternion algebras, and if $b\neq 0$ then $F[t : t^2+t=\gamma_1+(\frac{c_1^2}{b^2}+\frac{c_1 d_1}{b^2}+\frac{d_1^2}{b^2} \gamma_1)\delta_1]$ is a common cyclic splitting field of the three quaternion algebras.
However, $F$ is not 4-linked by Proposition \ref{No4}.
This provides a characteristic 2 analogue of \cite{ChapmanTignol:2019}.
\end{exmpl}

\section{Cyclic and Inesparable 3-Linkage of Symbol $p$-Algebras}\label{TripleLinkage}

In \cite{Chapman:2015} it was proven that if two symbol $p$-algebras of degree $p$ over a field $F$ of $\operatorname{char}(F)=p$ are inseparably linked then they are also cyclically linked.
Here we prove that the same holds for three algebras, under the assumption that $F$ is $p$-special, i.e. has no finite field extensions of degree prime to $p$.

\begin{thm}
Given a prime integer $p$ and a field $F$ of $\operatorname{char}(F)=p$, if three symbol $p$-algebras of degree $p$ over $F$ are inseparably linked then they become cyclically linked under a prime-to-$p$ extension of $F$ of degree at most $2p-1$.
\end{thm}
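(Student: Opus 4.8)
The plan is to pass to the symbol-algebra normal form, parametrise the cyclic splitting fields, and feed the two-fold statement of \cite{Chapman:2015} into a one-dimensional sliding argument whose control polynomial has degree at most $2p-1$.

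\emph{Reduction to a common inseparable slot.} Let $K=F[\sqrt[p]{\gamma}]$ be a common purely inseparable splitting field of the three algebras. Each of them contains $K$ as a maximal subfield, and a degree-$p$ central simple $F$-algebra containing $F[\sqrt[p]{\gamma}]$ is a symbol algebra $[\alpha,\gamma)_{p,F}$; so we may write the algebras as $A_i=[\alpha_i,\gamma)_{p,F}$ with the $\gamma$ common, and set $E_i=F[\wp^{-1}(\alpha_i)]$. For $r\in E_i$ the element $x_i+ry$ of $A_i$ (where $\wp(x_i)=\alpha_i$ and $y^p=\gamma$) satisfies $\wp(x_i+ry)=\alpha_i+\gamma\,N_{E_i/F}(r)$ — this is the identity underlying the system in the Example of \cref{Examplen2} — so over any extension $F'/F$ the cyclic extension $F'[\wp^{-1}(\varepsilon)]$ splits $A_i\otimes_F F'$ whenever $\varepsilon\equiv\alpha_i+\gamma\,N_{E_i\otimes F'/F'}(r)\pmod{\wp(F')}$ for some $r$. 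Hence producing a common cyclic splitting field over $F'$ amounts to finding $\eta\in F'$ and $r_i\in E_i\otimes_F F'$ with
\[
N_{E_i/F}(r_i)=\eta-\gamma^{-1}\alpha_i\qquad(i=1,2,3),
\]
i.e. an $F'$-point of the codimension-two subvariety $V\subset\mathbb{A}^{3p}$ cut out by $N_{E_2}(r_2)-N_{E_1}(r_1)=\gamma^{-1}(\alpha_1-\alpha_2)$ and $N_{E_3}(r_3)-N_{E_1}(r_1)=\gamma^{-1}(\alpha_1-\alpha_3)$.

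\emph{The easy case and the two-fold input.} If $\alpha_1,\alpha_2,\alpha_3$ are $\mathbb{F}_p$-dependent modulo the split locus $\{a:[a,\gamma)_{p,F}=0\}$, then some $A_i$ is Brauer-equivalent to a product of powers of the other two, and \cite{Chapman:2015} applied to those two yields a common cyclic splitting field over $F$ itself. So assume independence. Then applying \cite{Chapman:2015} to the pair $A_1,A_2$ (inseparably linked via the slot $\gamma$) gives an $F$-point of the first hypersurface defining $V$, and applying it to $A_1,A_3$ gives an $F$-point of the second.

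\emph{The sliding argument.} Regard the two partial solutions as points of $\mathbb{A}^{3p}$ and move along a line — more flexibly, a low-degree rational curve — through them inside $\mathbb{A}^{3p}$. Restricting the two degree-$p$ equations of $V$ to this curve, each acquires a known root at the partial solution lying on the corresponding hypersurface, so it factors there as that root's linear factor times a residual of degree at most $p-1$; the residual system is then cut out by a single polynomial $f(t)\in F[t]$ of degree at most $(p-1)+p=2p-1$ in the curve parameter. Since $p\nmid 2p-1$, the polynomial $f$ has an irreducible factor $f_0$ of degree prime to $p$ and at most $2p-1$, and the curve is set up so that every root of $f$ yields genuine solutions $r_i$ of the full system. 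Thus $L:=F[t]/(f_0)$ is a prime-to-$p$ extension of $F$ of degree at most $2p-1$ over which $A_1,A_2,A_3$ acquire a common cyclic splitting field, as required.

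\emph{The main obstacle.} The hard point is the sliding step: choosing the two partial solutions from \cite{Chapman:2015} and the connecting curve so that the control polynomial $f$ genuinely has degree $\le 2p-1$ — rather than the $\approx p^2$ forced by a careless intersection of two degree-$p$ hypersurfaces — and so that a whole Galois orbit of roots of $f$, not merely one fortunate root, solves the system. This requires exploiting the multiplicative structure of the norm forms $N_{E_i/F}$ together with the fact that the two-fold result of \cite{Chapman:2015} is realised through the $x_i+ry$ parametrisation; for $p=2$ one is manipulating precisely the two quadratic equations of the Example in \cref{Examplen2}. The degenerate configurations (some $\alpha_i\in\wp(F')$, or one of the auxiliary algebras splitting) are routine to handle separately.
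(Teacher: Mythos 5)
Your setup coincides with the paper's: you reduce to a common inseparable slot $\gamma$, use the identity $\wp(x_i+ry)=\alpha_i+\gamma N_{E_i/F}(r)$ to parametrise cyclic splitting fields by values of the norm forms, and arrive at the system $N_{E_i}(r_i)=\eta-\gamma^{-1}\alpha_i$ ($i=1,2,3$), which after eliminating $\eta$ is exactly the pair of equations the paper writes down (with $A_i$ rewritten as $[\alpha_i^p,\gamma)_{p,F}$ so that the norm form becomes $u^p-uv^{p-1}+\alpha_i^pv^p$). The reduction of the $\mathbb{F}_p$-dependent case to the two-algebra result of \cite{Chapman:2015} is also the paper's first step. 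But the heart of the theorem --- producing a \emph{single} control polynomial of degree $2p-1$ --- is precisely the step you flag as ``the main obstacle'' and do not carry out, and as described the sliding argument does not close. Restricting the two degree-$p$ hypersurface equations to a one-parameter curve yields \emph{two} polynomials in the parameter $t$, which generically have no common root; your count $(p-1)+p=2p-1$ tacitly presupposes a curve lying inside one of the two hypersurfaces, and you give no construction of such a curve, nor any reason that the residual degree of the second equation on it is as small as claimed rather than the $\approx p^2$ you yourself mention.

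The paper's resolution is an explicit elimination rather than a geometric deformation: writing $r_i=u_i+v_ix_i$, it sets $u_1=u_2=u_3$ (so the $u^p$ terms cancel identically and each equation becomes \emph{linear} in $u_1$), then $v_2=\frac{\alpha_1}{\alpha_2}v_1$ (so the $\alpha^pv^p$ terms cancel in the first equation, which collapses to $\beta\bigl((\alpha_1/\alpha_2)^{p-1}-1\bigr)u_1v_1^{p-1}=\alpha_2^p-\alpha_1^p$) and $v_3=1$. Solving that collapsed equation for $u_1$ as a rational function of $v_1$ and substituting into the second equation gives one polynomial of degree $2p-1$ in $v_1$, with nonzero leading coefficient because $\alpha_1\neq 0$ and $(\alpha_1/\alpha_2)^{p-1}\neq1$ --- so the independence assumption is needed here, not only in your easy case. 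Since $p\nmid 2p-1$, some irreducible factor has prime-to-$p$ degree at most $2p-1$, which finishes the proof. In your language, these substitutions \emph{are} the rational curve inside the first hypersurface along which the second equation has low degree; since finding it is the entire content of the argument, your proposal has a genuine gap at its central step.
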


\begin{proof}
The algebras are inseparably linked, so they can be written as $A_1=[\alpha_1,\beta)_{p,F}$, $A_2=[\alpha_2,\beta)_{p,F}$ and $A_3=[\alpha_3,\beta)_{p,F}$.
We can suppose that $\langle [A_1],[A_2],[A_3] \rangle$ is a subgroup of ${_pBr}(F)$ of order $p^3$, because otherwise it is enough to stress that two of them are cyclically linked (which is true by \cite{Chapman:2015}) and the third shares the same cyclic field extension of $F$ shared by the first two.
This means that $\alpha_1,\alpha_2,\alpha_3 \not \in \wp(F)=\{\lambda^p-\lambda : \lambda \in F\}$ and in particular they are nonzero.
Moreover, $\alpha_2 \not \in \mathbb{F}_p \alpha_1$, and so $(\frac{\alpha_1}{\alpha_2})^{p-1} \neq 1$.

Now, $A_i$ can also be written as $[\alpha_i^p,\beta)_{p,F}$, and a solution to the following system will show the algebras are cyclically linked:
\begin{eqnarray*}
\alpha_1^p+\beta(u_1^p-u_1 v_1^{p-1}+\alpha_1^p v_1^p) & = & \alpha_2^p+\beta(u_2^p-u_2 v_2^{p-1}+\alpha_2^p v_2^p)\\
\alpha_1^p+\beta(u_1^p-u_1 v_1^{p-1}+\alpha_1^p v_1^p) & = & \alpha_3^p+\beta(u_3^p-u_3 v_3^{p-1}+\alpha_3^p v_3^p).
\end{eqnarray*}
Take $u_1=u_2=u_3$, and then the equations reduce to
\begin{eqnarray*}
\alpha_1^p+\beta(-u_1 v_1^{p-1}+\alpha_1^p v_1^p) & = & \alpha_2^p+\beta(-u_2 v_2^{p-1}+\alpha_2^p v_2^p)\\
\alpha_1^p+\beta(-u_1 v_1^{p-1}+\alpha_1^p v_1^p) & = & \alpha_3^p+\beta(-u_3 v_3^{p-1}+\alpha_3^p v_3^p).
\end{eqnarray*}
and then isolate $u_1$ in both equations
\begin{eqnarray*}
\beta(u_1 v_2^{p-1}-u_1 v_1^{p-1}) & = & \alpha_2^p-\alpha_1^p+\beta(\alpha_2^p v_2^p-\alpha_1^p v_1^p)\\
\beta(u_1 v_3^{p-1}-u_1 v_1^{p-1}) & = & \alpha_3^p-\alpha_1^p+\beta(\alpha_3^p v_3^p-\alpha_1^p v_1^p).
\end{eqnarray*}
Take $v_2=\frac{\alpha_1}{\alpha_2} v_1$ and $v_3=1$, and then
\begin{eqnarray*}
\beta\left(\left(\frac{\alpha_1}{\alpha_2}\right)^{p-1}-1\right) u_1 v_1^{p-1} & = & \alpha_2^p-\alpha_1^p\\
\beta(u_1-u_1 v_1^{p-1}) & = & \alpha_3^p-\alpha_1^p+\beta(\alpha_3^p-\alpha_1^p v_1^p).
\end{eqnarray*}
Plugging in the first equation in the second gives
$$\beta u_1-\frac{\alpha_2^p-\alpha_1^p}{(\frac{\alpha_1}{\alpha_2})^{p-1}-1}=\alpha_3^p-\alpha_1^p+\beta(\alpha_3^p-\alpha_1^p v_1^p), \quad \text{and so}$$
$$\beta u_1=\alpha_3^p-\alpha_1^p+\frac{\alpha_2^p-\alpha_1^p}{(\frac{\alpha_1}{\alpha_2})^{p-1}-1}+\beta(\alpha_3^p-\alpha_1^p v_1^p).$$
Plugging this into the first equation gives
$$(\alpha_3^p-\alpha_1^p+\frac{\alpha_2^p-\alpha_1^p}{(\frac{\alpha_1}{\alpha_2})^{p-1}-1}+\beta(\alpha_3^p-\alpha_1^p v_1^p)) \left(\left(\frac{\alpha_1}{\alpha_2}\right)^{p-1}-1 \right) v_1^{p-1}=\alpha_2^p-\alpha_1^p,$$
 a degree $2p-1$ equation in one variable $v_1$, that must have a solution under some prime-to-$p$ extension of $F$ of degree up to $2p-1$.
\end{proof}

\begin{cor}
\
\begin{enumerate}
\item For $p$-special fields, inseparable linkage of three symbol $p$-algebras of degree $p$ implies cyclic linkage.
\item When $p=2$, every three inseparably linked quadratic 2-fold Pfister forms over $F$ are either cyclically linked or become cyclically linked over a degree 3 extension of $F$.
\end{enumerate}
\end{cor}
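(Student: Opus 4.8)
The plan is to read off both assertions from the theorem just proved, which states that three inseparably linked symbol $p$-algebras of degree $p$ over $F$ become cyclically linked over a prime-to-$p$ field extension of $F$ of degree at most $2p-1$.

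For part~(1) the key observation is that a $p$-special field possesses no nontrivial finite extension of degree prime to $p$. Indeed, if $L/F$ is finite, the separable closure $F_s$ of $F$ inside $L$ has $[F_s:F]$ a power of $p$, since the absolute Galois group of a $p$-special field is pro-$p$, and $[L:F_s]$ is a power of $p$ because $L/F_s$ is purely inseparable and $\mathrm{char}(F)=p$; hence $[L:F]$ is a power of $p$. Applying this to the extension supplied by the theorem --- which is prime to $p$ and of degree at most $2p-1$ --- forces it to be $F$ itself, so the three algebras are already cyclically linked over $F$. (If one wishes to argue from the proof rather than the statement of the theorem: the degree $2p-1$ polynomial in $v_1$ occurring there has leading coefficient $-\beta\alpha_1^p((\alpha_1/\alpha_2)^{p-1}-1)\neq 0$, so its degree is exactly $2p-1$; as $2p-1$ is coprime to $p$ it cannot be written as a sum of powers of $p$ each exceeding $1$, so over a $p$-special field one of its irreducible factors is linear, and the required root lies in $F$.)

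For part~(2) I would specialise the theorem to $p=2$ and transport it along the standard correspondence between quaternion algebras over a field of characteristic $2$ and quadratic $2$-fold Pfister forms, their reduced norm forms, under which inseparable and cyclic linkage of the algebras correspond exactly to inseparable and cyclic linkage of the forms. The theorem then gives cyclic linkage over a prime-to-$2$ extension of degree at most $2\cdot 2-1=3$, and the only prime-to-$2$ degrees that are at most $3$ are $1$ and $3$: a degree $1$ extension means cyclic linkage over $F$, and a degree $3$ extension gives the remaining alternative. Concretely, the cubic in $v_1$ either has a root in $F$, putting us in the first case, or is irreducible, and adjoining one of its roots produces the degree $3$ extension.

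I do not anticipate a genuine obstacle; both parts amount to bookkeeping on top of the theorem. The only points deserving a line of justification are the structure of finite extensions of a $p$-special field in part~(1) --- equivalently, the elementary fact that $2p-1$ is not a sum of integer powers of $p$ all greater than $1$ --- and, in part~(2), the verification that the quaternion-algebra/Pfister-form dictionary respects both notions of linkage, which is standard.
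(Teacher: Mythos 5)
Your proposal is correct and follows the same route the paper intends: the corollary is read off directly from the theorem, since a $p$-special field admits no nontrivial prime-to-$p$ extension (giving part (1)), and for $p=2$ the prime-to-$2$ extension of degree at most $3$ is of degree $1$ or $3$, transported to $2$-fold quadratic Pfister forms via the standard norm-form dictionary (giving part (2)). The extra details you supply — the nonvanishing leading coefficient of the degree $2p-1$ polynomial and the divisibility argument forcing a linear factor over a $p$-special field — are sound but not needed beyond what the theorem already states.
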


\sloppy Note that three cyclically linked symbol $p$-algebras need not be inseparably linked. For example, the algebras $[\alpha^{-1},\beta_1)_{p,F}$, $[\alpha^{-1},\beta_2)_{p,F}$ and $[\alpha^{-1},\beta_3)_{p,F}$ over $F=k(\!(\alpha)\!)(\!(\beta_1)\!)(\!(\beta_2)\!)(\!(\beta_3)\!)$ (or over $F_0=k(\alpha,\beta_1,\beta_2,\beta_3)$) 
are pair-wise not inseparably linked by the same argument as in \cite[Example 4.2]{Chapman:2015}.

When $p=2$, these algebras remain pair-wise not inseparably linked under any odd field extension of $F$. The reason is that $[\alpha^{-1},\beta_i)_{2,K}$ and $[\alpha^{-1},\beta_j)_{2,K}$ are inseparably linked if and only if the 3-fold Pfister form $\langle \! \langle \beta_i,\beta_j,\alpha^{-1}]\!]_K$ is hyperbolic by \cite{ChapmanGilatVishne:2017}, and since $\langle \! \langle \beta_i,\beta_j,\alpha^{-1}]\!]_F$ is anisotropic, it remains anisotropic under scalar extension tor any odd degree extension $K/F$.
Thus, if $K$ is the odd closure of $F$, then the quaternion algebras $[\alpha^{-1},\beta_1)_{2,K}$, $[\alpha^{-1},\beta_2)_{2,K}$ and $[\alpha^{-1},\beta_3)_{2,K}$ are not inseparably linked.

\section*{Acknowledgements}

The author thanks Jean-Pierre Tignol for his comments on the manuscript.
\bibliographystyle{abbrv}

\end{document}